\begin{document}
\theoremstyle{plain}
\newtheorem{Thm}{Theorem}
\newtheorem{Cor}{Corollary}
\newtheorem{Con}{Conjecture}
\newtheorem{Main}{Main Theorem}
\newtheorem{Lem}{Lemma}
\newtheorem{Prop}{Proposition}
\theoremstyle{definition}
\newtheorem{Def}{Definition}
\newtheorem{Note}{Note}
\newtheorem{Ex}{Example}
\theoremstyle{remark}
\newtheorem{notation}{Notation}
\renewcommand{\thenotation}{}
\errorcontextlines=0
\numberwithin{equation}{section}
\renewcommand{\rm}{\normalshape}%

\title[Fredholm-Regularity of Holomorphic Discs in Plane Bundles]%
   {Fredholm-Regularity of Holomorphic Discs in Plane Bundles over Compact Surfaces}
\author{Brendan Guilfoyle}
\address{Brendan Guilfoyle\\
          School of Science, Technology, Engineering and Mathematics\\
          Institute of Technology, Tralee \\
          Clash \\
          Tralee  \\
          Co. Kerry \\
          Ireland.}
\email{brendan.guilfoyle@ittralee.ie}
\author{Wilhelm Klingenberg}
\address{Wilhelm Klingenberg\\
 Department of Mathematical Sciences\\
 University of Durham\\
 Durham DH1 3LE\\
 United Kingdom}
\email{wilhelm.klingenberg@durham.ac.uk }

\begin{abstract}
We study the space of holomorphic discs with boundary on a surface in a real 2-dimensional vector bundle over a compact 2-manifold. We prove that, if the ambient 4-manifold admits a fibre-preserving transitive holomorphic action, then a section with a single complex point has $C^{2,\alpha}$-close sections such that any (non-multiply covered) holomorphic disc with boundary in these sections are Fredholm regular. 

Fredholm regularity is also established when the complex surface is neutral K\"ahler, the action is both holomorphic and symplectic, and the section is Lagrangian with a single complex point.

\vspace{0.1in}

On etude l'espace des courbes holomorphes a bord dans une surface reelle situe dans une fibree vectoriel de rang 2 sur une variete reelle a dimension deux. On prouve que, si le fibree ambient admet une action transitive et holomorphe qui preserve la fibration, alors une section avec un et seulement un point complexe admet des deformation petits dans la norme $C^{2,\alpha}$ tel que toute disque holomorphe a bord dans la deformation soit Fredholm reguliere. 

On prouve aussi la Fredholm regularite dans le cas que le fibree ambient est Kaehlerien a signature (2,2), l'action de la groupe e holomorphe et symplectique, et la surface bordante est Lagrangienne avec un seul poit complexe.

\end{abstract}

\maketitle

The problem of describing the set of holomorphic curves in symplectic 4-manifolds has generated a wealth of results over the past three decades \cite{AuaLaf} \cite{cam} \cite{gromov} \cite{hls} \cite{mcDaS} \cite{pol}. The techniques developed have led to non-existence theorems for Lagrangian submanifolds, as well as applications of fillings by holomorphic discs, insights into the structure of symplectic 4-manifolds and the emergence of Floer homology.

Of particular interest is the case of holomorphic discs with boundary lying on a Lagrangian surface. In this situation, it is usually assumed that the symplectic form on the ambient 4-manifold {\it tames} the (almost) complex structure, in that their composition yields a positive definite metric. In this case, Lagrangian surfaces are totally real and therefore are also good boundary conditions for holomorphic discs.

In this paper we consider the situation of minimal deviation from the usual setting: a real surface in $P$ with a single complex point, namely a point $p$ on the surface whose tangent plane is a complex line in $T_pP$. In general, such a boundary condition would not be Fredholm regular, however, with some extra assumptions on the ambient complex surface, we show that genericity can still be extracted. 

In particular, let $P\rightarrow M$ be a real 2-plane bundle over a compact surface $M$ without boundary. Let ${\mathbb J}$ be a complex structure on the total space $P$, and assume that $(P,{\mathbb J})$ admits a fibre-preserving group $G$ of holomorphic automorphisms that are transitive on the fibres. 

Examples of such manifolds include complex line bundles over $M$ for which the first Chern class is greater than or equal to the genus of $M$. In this case, the group is the space of global holomorphic sections, whose existence is guaranteed by the Riemann-Roch Theorem, and the action is addition in the vector space of sections by elements of the group. Other examples include certain spaces of oriented geodesics on manifolds with isometric group action \cite{gak}.

The main results we prove are:
\vspace{0.1in}

\begin{Thm} \label{t:1}
Let $\Sigma_0\subset P$ be a $C^{2+\alpha}$-smooth global section for $0<\alpha<1$ with one and only one complex point occurring at $p\in \Sigma_0 \subset P$. Denote by ${\mathcal U}$ be the collection of all those $C^{2+\alpha}$-close sections that pass through $p$.

Then there exists an open dense subset ${\mathcal W}\subset{\mathcal U}$ such that any (non-multiply covered) holomorphic disc with boundary on $\Sigma\in {\mathcal W}$ is Fredholm regular.
\end{Thm}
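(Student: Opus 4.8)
The plan is to establish Theorem~\ref{t:1} by a Sard--Smale parametric transversality argument, in which the parameter space is the family $\mathcal{U}$ of sections and one perturbs the boundary condition $\Sigma$ rather than the fixed integrable structure $\mathbb{J}$. First I would form the universal moduli space $\widetilde{\mathcal{M}}$ of pairs $(\Sigma,u)$ with $\Sigma\in\mathcal{U}$ and $u\colon(D,\partial D)\to(P,\Sigma)$ a simple $\mathbb{J}$-holomorphic disc, taken modulo reparametrisation. Working in H\"older or $W^{1,p}$ spaces ($p>2$), the object to study is the universal linearised Cauchy--Riemann operator
$$\mathcal{D}_{(\Sigma,u)}(\xi,\sigma)=D_u\xi+\Pi(\sigma),$$
where $D_u$ is the linearisation at fixed $\Sigma$ subject to the totally real boundary condition $\xi|_{\partial D}\in u^{*}T\Sigma$, and $\Pi(\sigma)$ records the first-order change of the boundary condition under an admissible deformation $\sigma$, namely a $C^{2+\alpha}$ section of the vertical bundle vanishing at $m=\pi(p)$. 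The goal is to show that $\mathcal{D}_{(\Sigma,u)}$ is surjective for every $(\Sigma,u)$, so that $\widetilde{\mathcal{M}}$ is a Banach manifold and the projection $\pi\colon\widetilde{\mathcal{M}}\to\mathcal{U}$ is Fredholm of index $\mathrm{ind}\,D_u$.

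The heart of the proof is this surjectivity. Suppose $\eta$ annihilates the image of $\mathcal{D}_{(\Sigma,u)}$. Orthogonality to $D_u\xi$ forces $\eta$ to solve the formal adjoint equation with the dual boundary condition, while orthogonality to $\Pi(\sigma)$ for all admissible $\sigma$ yields a boundary pairing $\int_{\partial D}\langle\eta,\sigma\rangle\,ds=0$. Since $u$ is non-multiply covered it has an injective boundary point, and since $\sigma$ is required to vanish only at the single point $m$, the boundary points forced to constrain $\eta$ (those mapping into the fibre over $m$) form a discrete set. Hence $\eta$ vanishes on a dense subset of $\partial D$, and unique continuation for the adjoint operator propagates this to $\eta\equiv0$ on $D$. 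This closes the argument for every disc whose boundary avoids the complex point $p$.

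The main obstacle, and the place where the hypotheses on $\mathbb{J}$ and $G$ are essential, is the behaviour at $p$: along $\partial D$ the subspace $u^{*}T\Sigma$ is totally real exactly away from $p$, so both the Fredholm property of $D_u$ and the unique-continuation step degenerate when the boundary meets the complex point. To handle such discs I would invoke the transitive holomorphic $G$-action to produce a fibre-preserving holomorphic normal form for $(P,\mathbb{J})$ along the fibre over $m$; transitivity standardises the vertical complex directions and places the complex point in Bishop normal form. In this model the singular Riemann--Hilbert problem at $p$ acquires an explicit solution and a well-defined, possibly shifted, Maslov-type index, so that $D_u$ remains Fredholm in suitably weighted spaces and the cokernel pairing extends continuously across $p$, completing the surjectivity claim. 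I expect this normal-form analysis at the complex point to be the genuinely difficult step, since it is precisely where the standard totally real theory fails and the group invariance must be exploited.

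With surjectivity in hand, the Sard--Smale theorem applied to the Fredholm map $\pi$ gives a residual, hence dense, set of regular values $\Sigma\in\mathcal{U}$; for each such $\Sigma$ every simple disc with boundary on $\Sigma$ is Fredholm regular, which yields the dense subset. To upgrade density to the open density asserted in the theorem I would use Gromov compactness: Fredholm regularity is an open condition on $(\Sigma,u)$, and the constraint that every section pass through the fixed point $p$ controls bubbling near the complex point, so the set $\mathcal{W}$ of sections all of whose simple discs are regular is both open and dense in $\mathcal{U}$.
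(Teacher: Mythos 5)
Your core argument---parametric transversality in which the boundary section is the perturbation parameter, an annihilator/adjoint computation to prove surjectivity of the universal linearised operator, and Sard--Smale applied to the projection to ${\mathcal U}$---is exactly the paper's proof. The paper implements it via the map $\Delta(f,\Sigma)=(\bar{\partial}f,\Phi_\Sigma^{-1}\circ f|_{\partial D})$, shows by integration by parts that an $L^2$-annihilator $(\psi,\alpha)$ satisfies ${\nabla}^-_{\mathbb J}\psi=0$ with boundary conditions forcing $(\psi,\alpha)=(0,0)$, and concludes with the Fredholm property (H\"ormander) and Sard--Smale; your $\eta$-annihilator and unique-continuation paragraph is the same computation, and your model for admissible deformations $\sigma$ (vertical sections vanishing at $\pi(p)$) is the paper's Lemma \ref{l:1}.

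The genuine divergence, and the gap, is your third paragraph. You designate discs whose boundary meets the complex point $p$ as the heart of the proof and propose to handle them by a $G$-equivariant Bishop normal form, weighted spaces and a shifted Maslov-type index---but you only say you \emph{would} invoke this and \emph{expect} it to work; none of that singular Riemann--Hilbert analysis is carried out, and it is precisely the part not covered by standard totally real theory. So as written your proof is incomplete at the step you yourself identify as the difficult one. The paper never faces this case: in its definition of ${\mathcal F}_A$ the disc is required to map $\partial D$ into the \emph{totally real part} of $\Sigma$, so boundaries through the complex point are excluded from the moduli problem by definition, and only the classical totally real linear theory is needed; the group action is used instead to give ${\mathcal U}$ its Banach manifold structure, which you assumed implicitly. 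Separately, your appeal to Gromov compactness to upgrade density to openness is questionable in this setting: Theorem \ref{t:1} involves no taming symplectic structure (and Theorem \ref{t:2} is neutral, not tame), so the usual energy/compactness package is not available; the paper obtains the open dense set directly from the Sard--Smale step.
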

\vspace{0.1in}

In the K\"ahler case, where $(P,{\mathbb J})$ has a compatible symplectic structure $\Omega$, one can also seek holomorphic discs whose boundary lie on a Lagrangian surface which contains a single complex point. 

As mentioned above, if the symplectic form tames the complex structure, a Lagrangian surface is automatically totally real and therefore cannot contain complex points. Thus, in order to consider Lagrangian surface with complex points, we assume that the compatible metric is indefinite of signature (2,2). Such metrics arise, for example, on spaces of geodesics \cite{gak4}.

Suppose $(P,{\mathbb J},\Omega)$ is neutral K\"ahler, i.e. the complex structure and symplectic form are compatible and the associated metric $G(\cdot,\cdot)=\Omega({\mathbb J}\cdot,\cdot)$ is of signature (2,2) - and the transitive action is both holomorphic and symplectic, then we prove that:

\vspace{0.1in}
\begin{Thm}\label{t:2}
Let $\Sigma_0\subset P$ be a $C^{2+\alpha}$-smooth Lagrangian section with just one complex point and ${\mathcal U}$ be the space of Lagrangian sections that are $C^{2+\alpha}$-close to $\Sigma_0$.

Then, there exists an open dense subset ${\mathcal W}\subset{\mathcal U}$ such that any (non-multiply covered) holomorphic disc with boundary on $\Sigma\in {\mathcal W}$ is Fredholm regular.

\end{Thm}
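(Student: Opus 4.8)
The plan is to derive Theorem~\ref{t:2} by the Sard--Smale scheme, in close parallel to the proof of Theorem~\ref{t:1}, the single genuinely new issue being that the perturbation space is now cut down to the Lagrangian sections $\mathcal{U}$, so one must verify that this restriction does not destroy universal transversality. First I would assemble the universal moduli space
$$\widetilde{\mathcal{M}}=\{(u,\Sigma):\Sigma\in\mathcal{U},\ \bar\partial_{\mathbb{J}}u=0,\ u(\partial D)\subset\Sigma,\ u\ \text{not multiply covered}\},$$
realised as the zero locus of a section $\mathcal{F}(u,\Sigma)=\bar\partial_{\mathbb{J}}u$ of a Banach bundle over $\mathcal{B}\times\mathcal{U}$, where $\mathcal{B}$ is the manifold of $W^{1,p}$ (or $C^{1,\alpha}$) maps carrying the boundary condition $u(\partial D)\subset\Sigma$. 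The vertical linearisation splits as the Cauchy--Riemann operator $D\bar\partial_u$, which is Fredholm with index reflecting the contribution of the single complex point, together with the infinitesimal change of the totally real boundary condition produced by moving $\Sigma$ through $\mathcal{U}$.

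The heart of the matter is to show that the full linearisation of $\mathcal{F}$ is surjective at every $(u,\Sigma)\in\widetilde{\mathcal{M}}$, so that $\widetilde{\mathcal{M}}$ is a Banach manifold. I would argue by duality: a cokernel element $\eta$ lies in $\ker D\bar\partial_u^{*}$, hence is smooth by elliptic regularity and enjoys unique continuation, and must in addition annihilate every boundary variation arising from a Lagrangian deformation of $\Sigma$. Here the hypotheses on $G$ are decisive. Since the action is simultaneously holomorphic and symplectic, each infinitesimal generator $X$ is a vertical, holomorphic, locally Hamiltonian vector field; its flow preserves both $\mathbb{J}$ and the Lagrangian condition, so it moves $\Sigma$ within $\mathcal{U}$ and $X|_\Sigma$ is an admissible tangent to $\mathcal{U}$. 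Because $\Sigma$ is a section it is transverse to the fibres, whence $T\Sigma$ together with the vertical directions spans $TP$ along $\Sigma$; transitivity of $G$ on the fibres therefore makes the vectors $X\circ u$ attain every vertical direction along $\partial D$, so the enlarged boundary condition fills the whole of $u^{*}TP$ there. The orthogonality relations $\int_{\partial D}\langle\eta,X\circ u\rangle\,ds=0$, valid for all such $X$, then force $\eta|_{\partial D}=0$, and unique continuation up to the boundary yields $\eta\equiv 0$. This is exactly the point at which the symplectic nature of the action is indispensable: it allows the rich fibrewise freedom exploited in Theorem~\ref{t:1} to be exercised \emph{without leaving the Lagrangian class}.

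Granting universal transversality, the projection $\pi\colon\widetilde{\mathcal{M}}\to\mathcal{U}$ is a Fredholm map of the same index as $D\bar\partial_u$, and the Sard--Smale theorem produces a residual, hence dense, set of $\Sigma\in\mathcal{U}$ whose preimages consist entirely of Fredholm regular discs. To upgrade this to the open dense set $\mathcal{W}$ claimed, I would show that Fredholm regularity is stable, i.e. that surjectivity of $D\bar\partial_u$ persists under $C^{2+\alpha}$-small motions of $\Sigma$ and small motions of $u$, so that the regular locus is open provided the relevant space of discs is compact. I expect the principal obstacle to sit precisely at the complex point together with the neutral signature. At the complex point the boundary condition fails to be totally real, so the Fredholm setup, the boundary-vanishing step, and the unique continuation must all be carried out with the degenerate Riemann--Hilbert condition there; and because the metric $G$ has signature $(2,2)$ the energy of a holomorphic disc need not be sign-definite, so the compactness underlying openness cannot be imported from the positive-definite theory and must instead be extracted from the bundle geometry and the transitive action. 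Checking that the symplectic Lagrangian deformations remain transverse \emph{at} the complex point, and that sufficient compactness survives in neutral signature, is where the real work is concentrated.
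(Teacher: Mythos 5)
Your overall scheme (universal moduli space, duality description of the cokernel, Sard--Smale) is the same as the paper's, but the step that carries all the weight --- killing the cokernel element $\eta$ --- rests on a mechanism that does not work. You generate boundary perturbations from the infinitesimal action of $G$: since the generators $X$ span the vertical directions pointwise along $\partial D$, you conclude from $\int_{\partial D}\langle\eta,X\circ u\rangle\,ds=0$ that $\eta|_{\partial D}=0$. That inference is invalid. The generators of $G$ cannot be localised (you cannot multiply a group generator by a cutoff and remain in the admissible class), and in the motivating examples $G$ is finite dimensional --- the Euclidean group acting on ${\mathbb L}({\mathbb R}^3)$, or the holomorphic sections supplied by Riemann--Roch --- so the boundary values $X\circ u$ span only a finite-dimensional subspace of sections along $\partial D$. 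Pointwise spanning plus $L^2$-orthogonality to such a family does not force pointwise vanishing: constant vector fields span ${\mathbb R}^2$ at every point of a circle, yet a nonzero field along the circle can be $L^2$-orthogonal to all constants. Nothing in your proposal replaces this missing infinite-dimensional, localisable family of Lagrangian perturbations.

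The paper kills the cokernel with exactly such a family, and this is also where it actually uses the hypothesis you attribute to the group. Its perturbations are the normal fields $X_h={\mathbb J}({\mbox{grad}}_g h)\;{\mbox{mod}}\;T\Sigma_0$ for arbitrary $h\in C^\infty_0(\Sigma_0)$; being localisable, they convert the boundary integral identity (after integration by parts) into the pointwise equations $\nabla^-_{\mathbb J}\psi=0$, $\tilde\psi=\alpha$ and $\alpha^\perp=0$ on $\partial D$, whence $(\psi,\alpha)=(0,0)$. The only role of the K\"ahler/Lagrangian hypothesis in Theorem \ref{t:2} is that these same $X_h$ are automatically infinitesimally Lagrangian (in the K\"ahler case $X_h\lrcorner\Omega=-dh$ is exact, i.e.\ they are Hamiltonian), so the Theorem \ref{t:1} transversality argument restricts verbatim to ${\mathcal U}_L$ --- this is what the paper means by saying the argument ``is not affected by the Lagrangian property of the boundary condition.'' The transitive holomorphic and symplectic action of $G$ is used elsewhere entirely: in Lemma \ref{l:1} and its Lagrangian analogue, to give the space of (Lagrangian) sections through the complex point $p$ a Banach manifold structure by normalising so that sections vanishing at $p$ represent all nearby boundary conditions. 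Relatedly, your worry about running the Riemann--Hilbert and unique-continuation theory through the degenerate point does not arise in the paper's setup, since discs in ${\mathcal F}_A$ are required to have boundary in the totally real part of $\Sigma$; the complex point enters only through this normalisation of the perturbation space, not through the Fredholm analysis on the disc.
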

\vspace{0.1in}

A special case of Theorem \ref{t:2} arises in the following setting studied in \cite{gak4}. Consider the collection ${\mathbb L}({\mathbb R}^3)$ of oriented lines of Euclidean 3-space, which may be identified with the total space of the tangent bundle to the 2-sphere. This takes the role of the plane bundle $P$ and it admits a canonical metric of signature $(2,2)$. 

This metric is K\"ahler, with compatible complex and symplectic structures, and is scalar flat, although it is not Einstein . The Euclidean group gives a  fibre-preserving group of symplectic, holomorphic automorphisms that are transitive on the fibres of $P$. 

An oriented smooth surface in ${\mathbb R}^3$ gives rise, through its oriented normal lines, to a smooth surface in ${\mathbb L}({\mathbb R}^3)$. This surface is Lagrangian and the induced metric is either Lorentz or degenerate, where the degeneracy occurs precisely at the umbilic points of the surface. These degeneracies correspond to complex points on the Lagrangian surface in  ${\mathbb L}({\mathbb R}^3)$.

The Carath\'eodory Conjecture, first stated in the 1920's, claims that the number of isolated umbilic points on a closed convex surface needs to be at least two. In a reformulation in terms of its normals in ${\mathbb L}({\mathbb R}^3)$, this leads to the same lower bound on the number of complex points on a Lagrangian sections of the tangent bundle of the 2-sphere. 

A key element of the authors' approach to this conjecture is the Fredholm regularity for holomorphic discs with Lagrangian boundary in this setting. Theorems \ref{t:1} and \ref{t:2} establish Fredholm regularity in the generalized setting of neutral K\"ahler plane bundles with isometric group action, and in particular, verify this part of the unpublished proof of 2013 \cite{gak2}.

In the next section we consider real surfaces in a complex surface and prove Theorem \ref{t:1}. Following a brief discussion of the difference between the definite and indefinite K\"ahler cases, we consider Lagrangian surfaces in an indefinite K\"ahler 4-manifold and prove Theorem \ref{t:2}.

\vspace{0.1in}

\section{Real Boundary Conditions}

Let $\pi:P\rightarrow M$ be a real 2-plane bundle over a surface $M$ endowed with a complex structure ${\mathbb J}$. Assume that $(P,{\mathbb J})$ admits a fibre-preserving group $G$ of holomorphic diffeomorphisms whose restriction to any fibre of $P$ is transitive. 

We now analyze holomorphic discs in the complex surface $(P,{\mathbb J})$ with boundary lying on a surface $\Sigma$. Fix  $\alpha\in(0,1)$, $s\geq 1$ and denote by $C^{k+\alpha}$ and $H^{1+s}$ the usual H\"older and Sobolev spaces, respectively.

Define the space of {\it H\"older boundary conditions} by
\[
{\mathcal S}\equiv \left\{ \;\Sigma\subset P \; \;\left| \;\;  \Sigma\; {\mbox{is a}}\; C^{2+\alpha} \;{\mbox{- \;section of }}P\rightarrow M\;\right.\right\}  .
\]
Endow ${\mathcal S}$ with the $C^{2+\alpha}$ topology  for global sections of $P$. Since it preserves fibres, the group $G$ acts continuously on ${\mathcal S}$ by composition with the section. 

For a fixed $\Sigma\in{\mathcal S}$, the differentiable structure of the set of $C^{2+\alpha}$ sections of the normal bundle $N\Sigma=T_\Sigma P/T\Sigma$ exponentiates to give an infinite-dimensional manifold structure for an open neighbourhood of $\Sigma$ in ${\mathcal S}$. 

If $\Sigma$ were totally real, then $N\Sigma\cong {\mathbb J}T\Sigma\cong T\Sigma$ where these are identified as sub-bundles of $T_{\Sigma}P$. This gives a canonical Banach manifold structure modeled on sections of $T\Sigma$:
\[
\Gamma( T \Sigma) \cong \Gamma (J T\Sigma)  \hookrightarrow \Gamma( N \Sigma) \stackrel{{\mbox{exp}}}{\rightarrow} {\mathcal S} .
\] 

In our situation, the section will be allowed to have a complex point and so we must modify this argument. 

In particular, for a fixed point $ p\in P$ 
define
\[
{\mathcal S}_0\equiv \left\{ \;\Sigma\in{\mathcal S}\;\; \left| \;\;   p\in\Sigma \;\right.\right\}  ,
\]
endowed with the induced  H\"older space $C^{2+\alpha}$ -topology.

Note that ${\mathcal S}_0$ can be identified with the quotient of ${\mathcal S}$ by $G_0=G/{\mbox{stab}}(p)$ since for every $\Sigma \in \mathcal S$ there exists a unique element of $G_0$ taking $\pi^{-1}(\pi( p))\cap\Sigma\in{\mathcal S}$ to $p$. In other words, we have a homeomorphism ${\mathcal S}_0 \to {\mathcal S}/G_0$, where the equivalence relation is $\Sigma_1\sim \Sigma_2$ iff there exists $g\in G_0$ such that $g(\Sigma_1)=\Sigma_2$.

Let $\Sigma_0\in{\mathcal S}_0$ have one and only one complex point and assume it occurs at $p\in\Sigma_0\subset P$. Then there exists an open neighbourhood of $\Sigma_0$ in
${\mathcal S}_0$ which is a Banach manifold. We prove this as follows.

\vspace{0.1in}

\begin{Lem}\label{l:1}
Let $\Sigma_0 \in {\mathcal S}_0 $ be a surface with one and only one complex point occuring at $ p$.
Then there exists a local homeomorphism
\[
\Phi : \Gamma ({\mathbb J}T\Sigma_0){\mbox{ mod }}\Gamma (T\Sigma_0)   \hookrightarrow {\mathcal S}_0 , 
\]
where we view $T\Sigma_0$  and ${\mathbb J}T\Sigma_0$ as vector sub-bundles of $T_{\Sigma_0}P$, and $s_1 \sim s_2$  if there exists a section $t \in \Gamma (T\Sigma_0)$ depending on $s_1 , s_2 $ with $s_1 - s_2 = t $.

In particular,  there exists an open neighbourhood of  $\Sigma_0$ in ${\mathcal S}_0$ which has a canonical Banach manifold structure.
\end{Lem}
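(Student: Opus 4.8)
The plan is to chart ${\mathcal S}_0$ locally by the Banach space $\Gamma({\mathbb J}T\Sigma_0)$ modulo $\Gamma(T\Sigma_0)$, with the complex structure ${\mathbb J}$ supplying the chart exactly as in the totally real case, the only new feature being a controlled degeneration of this identification at the single complex point $p$. I would begin by recording that the full space ${\mathcal S}$ of $C^{2+\alpha}$ sections is an affine Banach space, hence trivially a Banach manifold whose tangent space at $\Sigma_0$ is $\Gamma(N\Sigma_0)$, identified with sections of the vertical bundle of $P\to M$. Since ${\mathcal S}_0$ is cut out by the single (vector-valued) constraint that the section pass through $p$, equivalently ${\mathcal S}_0\cong{\mathcal S}/G_0$, its model tangent space is the codimension-two subspace $\{v\in\Gamma(N\Sigma_0):v(\pi(p))=0\}$. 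The entire content of the lemma is therefore the identification of this subspace, through ${\mathbb J}$, with $\Gamma({\mathbb J}T\Sigma_0)$ modulo $\Gamma(T\Sigma_0)$.

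The map $\Phi$ itself I would build from the bundle homomorphism $A:=q\circ\iota:{\mathbb J}T\Sigma_0\to N\Sigma_0$, where $\iota$ is the inclusion into $T_{\Sigma_0}P$ and $q$ the quotient projection; concretely $A$ records the normal component of a ${\mathbb J}$-tangential field, which is precisely the passage modulo $\Gamma(T\Sigma_0)$. For $s\in\Gamma({\mathbb J}T\Sigma_0)$ small I set $\Phi(s):=\exp(As)$, the image section obtained by exponentiating the normal field $As$. Away from $p$ the surface $\Sigma_0$ is totally real, so $A$ is a fibrewise isomorphism and this reproduces the standard chart $\Gamma({\mathbb J}T\Sigma_0)\cong\Gamma(N\Sigma_0)$. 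At $p$, however, the complex point condition gives ${\mathbb J}T_p\Sigma_0=T_p\Sigma_0\subset\ker q$, so $A_p=0$; consequently $(As)(\pi(p))=0$ for every $s$, and $\Phi(s)$ automatically passes through $p$, that is $\Phi(s)\in{\mathcal S}_0$. That $\Phi$ descends to the quotient modulo $\Gamma(T\Sigma_0)$ is immediate, since adding a tangential field alters only the parametrisation and not the image surface.

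It remains to show that $\Phi$ is a local homeomorphism onto a neighbourhood of $\Sigma_0$ in ${\mathcal S}_0$, and this is where I expect the real work to lie. Its linearisation at the origin is $A_*:\Gamma({\mathbb J}T\Sigma_0)\to\{v:v(\pi(p))=0\}$, which is injective because $\ker A_x=0$ for $x\neq p$, so a continuous section with $As=0$ vanishes off $p$ and hence everywhere. The difficulty is surjectivity together with continuity of the inverse: since the entries of $A$ vanish to first order at $p$ (the planes $T\Sigma_0$ and ${\mathbb J}T\Sigma_0$ separate linearly in the distance to $p$, so $\det A$ has an isolated second-order zero), inverting $A$ near $p$ amounts to dividing a field vanishing at $\pi(p)$ by a quantity with a simple zero. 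I would control this by passing to a Bishop-type normal form for the complex point, in which $A$ becomes an explicit multiplication operator, and then checking that the defining constraint $v(\pi(p))=0$ of ${\mathcal S}_0$ is exactly the condition needed for $A^{-1}v$ to extend across $p$ with the correct Hölder regularity. The main obstacle is thus the Hölder bookkeeping at the complex point: one must verify that no derivative is genuinely lost, that is, that the codimension-two vanishing condition matches the order of degeneracy of $A$ so that $A_*$ is a topological isomorphism of Banach spaces rather than merely a continuous injection with dense image.

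Granting this, $\Phi$ is a homeomorphism from a neighbourhood of the origin in $\Gamma({\mathbb J}T\Sigma_0)$ modulo $\Gamma(T\Sigma_0)$ onto a neighbourhood of $\Sigma_0$ in ${\mathcal S}_0$, and transporting the linear structure through $\Phi$ equips this neighbourhood with the asserted Banach manifold structure. I would close by observing that the construction is independent of the auxiliary choices (the connection or metric used to form $\exp$) up to the tangential reparametrisations already quotiented out, which is what renders the resulting structure canonical.
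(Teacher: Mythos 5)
Your architecture is essentially the paper's: realize ${\mathcal S}_0$ as ${\mathcal S}/G_0$ (equivalently, impose the constraint $v(\pi(p))=0$), model it on normal sections vanishing at $p$, exponentiate, and then identify that model with ${\mathbb J}$-tangential fields. The difference is that the paper never constructs your bundle map $A$ nor proves anything about it: it simply \emph{declares} the identification of $\Gamma({\mathbb J}T\Sigma_0{\mbox{ mod }}T\Sigma_0)$ with $\Gamma_0(N\Sigma_0)=\left\{v\in\Gamma(N\Sigma_0)\;\left|\;v(\pi(p))=0\right.\right\}$ (the quotient ``bundle'' has zero fibre at $p$, so its sections are read off as normal sections vanishing there), and the Banach manifold structure is carried entirely by $\exp\colon\Gamma_0(N\Sigma_0)\to{\mathcal S}_0$. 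So the step you correctly isolate as ``where the real work lies'' is exactly the step the paper treats as definitional.

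The gap is that your proposed resolution of that step would fail the check, not merely that it is unverified. You claim $v(\pi(p))=0$ is ``exactly the condition needed for $A^{-1}v$ to extend across $p$''. Take the best case of a nondegenerate complex point, so that in local coordinates $x$ centred at $\pi(p)$ one has $A_x=L(x)+O(|x|^{1+\alpha})$ with $L$ linear and $L(x)$ invertible for $x\neq 0$, and let $v\in\Gamma_0(N\Sigma_0)$ have $Dv(0)=M$. Along a ray $x=r\omega$, $|\omega|=1$,
\[
A_x^{-1}v(x)=\bigl(rL(\omega)+O(r^{1+\alpha})\bigr)^{-1}\bigl(rM\omega+O(r^{2})\bigr)\longrightarrow L(\omega)^{-1}M\omega
\qquad \mbox{as }r\to 0,
\]
a direction-dependent limit unless $M$ lies in the two-dimensional subspace $\left\{\,L(\cdot)c\;\left|\;c\in{\mathbb R}^2\right.\right\}$ of $2\times 2$ matrices. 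Thus for generic $v$ vanishing at $p$ the field $A^{-1}v$ is bounded but discontinuous at $p$: $A_*$ is injective, but its image is a proper subspace of $\Gamma_0(N\Sigma_0)$, cut out already at the level of first derivatives at $p$ by two further linear conditions (and in the forward direction there is an additional loss, since $A$ is only $C^{1+\alpha}$ when $\Sigma_0$ is $C^{2+\alpha}$, so $As$ need not be $C^{2+\alpha}$; a degenerate complex point only worsens the mismatch). Consequently $\Phi(s)=\exp(As)$ is not onto any neighbourhood of $\Sigma_0$ in ${\mathcal S}_0$, and no H\"older bookkeeping in a Bishop normal form can repair this: the degeneracy of $A$ does not match the codimension-two vanishing condition. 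Note also that the equivalence relation you quotient by is vacuous, since a difference of sections of ${\mathbb J}T\Sigma_0$ that is everywhere tangential must vanish off $p$, hence everywhere. To obtain the ``in particular'' clause, do what your first paragraph (and the paper's proof, read charitably) actually does: take $\Gamma_0(N\Sigma_0)$ itself as the chart via $\exp$, and regard the ${\mathbb J}$-tangential description as notation for that space rather than as an analytic isomorphism implemented by ${\mathbb J}$.
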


\begin{proof}
For any surface $\Sigma\subset P$ let $T_{\Sigma} P{\mbox{ mod }}T\Sigma\cong  N\Sigma$ be the normal bundle of $\Sigma$. In our case $\Sigma_0$ is a section, and so we can identify $N\Sigma_0\cong{\mbox{ker }}d\pi|_{\Sigma_0}$.

Now, the key point is that if $\Sigma_0$ has only one complex point at $p$, then ${\mathbb J}T\Sigma_0{\mbox{ mod }}T\Sigma_0$ is a plane bundle with vanishing fibre at the complex point, where ${\mathbb J}T_{ p} \Sigma_0 = T_{ p}\Sigma_0$. Thus
\[
{\mathbb J}T\Sigma_0{\mbox{ mod }}T\Sigma_0|_{\Sigma_0-\{p\}} \cong N\Sigma_0|_{\Sigma_0-\{p\}}
\]
is an isomorphism of plane bundles over $\Sigma_0-\{p\}$. 

We now use the transitive action of $G$ to identify sections of these bundles over the whole of $\Sigma_0$. $G_0$ maps sections 
$\Gamma(N{\Sigma_0})\cong\Gamma({\mbox{ker }}d\pi|_{\Sigma_0})$ to sections. Define
\[
\Gamma_0(N\Sigma_0)=\{s:M\rightarrow N\Sigma_0\;\left|\; s{\mbox{ is a}}\; C^{2+\alpha}{\mbox{-section with}}\;\;s(\pi(p))=0\;\right.\}
\]
Then $\Gamma(N{\Sigma_0})/G_0 \cong \Gamma_0(N\Sigma_0)$, as we can take sections that vanish at $p$ as representatives of the equivalence classes on the left.

Moreover, we can identify $\Gamma({\mathbb J}T\Sigma_0{\mbox{ mod }}T\Sigma_0)$ with $\Gamma_0( N \Sigma_0)$ and 
\[
 \Gamma_0( N \Sigma_0) \stackrel{{\mbox{exp}}}{\rightarrow} {\mathcal S}_0 
\]
is a global smooth  embedding. Thus the composition
\[
\Phi : \Gamma ({\mathbb J}T\Sigma_0){\mbox{ mod }}\Gamma (T\Sigma_0)
 \cong \Gamma_0(N \Sigma_0)
 \stackrel{{\mbox{exp}}}{\rightarrow} {\mathcal S}_0
\] 
is a canonical embedding of the Banach space of $C^{2+\alpha} $ - smooth sections of  ${\mathbb J}T\Sigma_0{\mbox{ mod }}T\Sigma_0$
into $ {\mathcal S}_0$, giving rise to an open Banach manifold of variations of $ \Sigma_0 \in {\mathcal S}_0$.

\end{proof}
\vspace{0.1in}

For $s > 1, (2 - \alpha)/2 = 1/s $ and a relative class $A\in\pi_2(P,\Sigma)$, the space of
{\it parameterized Sobolev-regular discs with Lagrangian boundary condition} is defined by
\[
{\mathcal{F}}_A\equiv \left\{\;(f, \Sigma)\in H^{1+s}(D,P)\times{\mathcal U}\;\;\left|\;\; [f]=A,\; f(\partial D)\subset {\mbox{ totally real part of }} \Sigma   
\;\right.\right\},
\]
where ${\mathcal U}$ is the Banach manifold neighbourhood of $\Sigma_0$ as above.
The space ${\mathcal{F}}_A$ is a Banach manifold and the the projection 
$\pi:{\mathcal{F}}_A\rightarrow {\mathcal{U}}$: $\pi(f, \Sigma)= \Sigma$ is a Banach
bundle. 

For $(f, \Sigma)\in{\mathcal F}_A$ define 
$\bar{\partial}f={\textstyle{\frac{1}{2}}}(df\circ j- J \circ df)$, where
$j$ is the complex structure on $D$. Then
$\bar{\partial}f\in H^s(f^*T ^{01}P) \equiv H^s(f^*TP) $ and we define the space of sections
\[
H^s\equiv\bigcup_{(f, \Sigma)\in{\mathcal F}_A}H^s(f^*TP).
\]
This is a Banach vector bundle over ${\mathcal F}_A$ and the operator $\bar{\partial}$ is
a section of this bundle.

\vspace{0.1in}
\begin{Def}
The {\it set of holomorphic discs with H\"older boundary condition} is defined by
\[
{\mathcal M}_A\equiv\left\{\;(f, \Sigma)\in{\mathcal F}_A\;\;\left|\;\; \bar{\partial}f=0 \;\right.\right\}.
\]
\end{Def}
\vspace{0.1in}

As before let $\Sigma_0$ be a H\"older section with a single complex point at $ p$. We now prove our main result.

\vspace{0.1in}
\noindent{\bf Theorem \ref{t:1}}:
{\it 
Let $\Sigma_0\subset P$ be a $C^{2+\alpha}$-smooth section for $0<\alpha<1$ with just one complex point at $p\in P$, and let ${\mathcal U}$ be the space of 
$C^{2+\alpha}$-close sections that pass through $p$.

Then, there exists an open dense subset ${\mathcal W}\subset{\mathcal U}$ such that any (non-multiply covered) holomorphic disc with boundary on 
$\Sigma\in {\mathcal W}$ is Fredholm regular.
}
\vspace{0.1in}

\begin{proof}
We first show that there exists a neighbourhood of $\Sigma_0$, denoted
${\mathcal V}\subset{\mathcal S}_0$, such that ${\mathcal M}_A\cap\pi^{-1}({\mathcal V})$ is a Banach submanifold of ${\mathcal F}_A\cap\pi^{-1}({\mathcal V})$.
  
This is established by considering the smooth map
\[
\Delta:{\mathcal F}_A\cap\pi^{-1}({\mathcal U})\rightarrow H^s\times \Omega(P),
\]
defined by $\Delta(f,\Sigma)=(\bar{\partial}f,\Phi^{-1}_\Sigma\circ f|_{\partial D})$, where 
$\Omega(P)$ is the set of loops in $P$ and $\Phi_\Sigma$ is an ambient smooth isotopy which takes $\Sigma_0$ to $\Sigma\in{\mathcal U}$. For ease
of notation, we suppress the composition $\Phi^{-1}_\Sigma\circ f|_{\partial D}$ and simply write $f|_{\partial D}$.

Thus
\[
{\mathcal M}_A\cap\pi^{-1}({\mathcal U})=\Delta^{-1}(\{0\}\times\Omega_A({\mathcal U})).
\]
Here
\[
\Omega_A({\mathcal U})=\bigcup_{\Sigma\in{\mathcal U}}\{{\mbox{ loops in }}\Sigma{\mbox{ bounding an embedded disc in the class }}A\in\pi_2(P,\Sigma)\}.
\]
To prove the Theorem by the implicit function theorem we must show that $\Delta$ is transverse to the submanifold
\[
\{0\}\times\Omega_A({\mathcal U})\subset H^s\times \Omega(P),
\]
at $\Sigma_0$. Transversality at $\Sigma_0$ is proved by a modification of Theorem \ref{t:1} in Oh \cite{oh}, which we now outline.

Let $(f,\Sigma)\in{\mathcal M}_A$, which means that
\[
\bar{\partial}f=0
\qquad\qquad
f|_{\partial D}\subset\Sigma_0
\qquad\qquad
[f]=A {\mbox{ in }}\pi_2(P,\Sigma_0).
\]
To show transversality, we need to prove that
\[
{\mbox{Im}}\left(D_{(f,\Sigma)}\Delta\right)+\{0\}\oplus T_{f|_{\partial D}}\Omega_A({\mathcal U})=
T_{(0,f|_{\partial D})} H^s\times \Omega(P),
\]
or, denoting the $L^2$-adjoint by $^{\perp_{L^2}}$, equivalently
\begin{align}
0&=\left({\mbox{Im}}\left(D_{(f,\Sigma)}\Delta\right)+\{0\}\oplus T_{f|_{\partial D}}\Omega_A({\mathcal U})\right)^{\perp_{L^2}}\nonumber\\
&=\left({\mbox{Im}}\left(D_{(f,\Sigma)}\Delta\right)\right)^{\perp_{L^2}}\cap\left(\{0\}\oplus T_{f|_{\partial D}}\Omega_A({\mathcal U})\right)^{\perp_{L^2}}\nonumber\\
&=\left({\mbox{Im}}\left(D_{(f,\Sigma)}\Delta\right)\right)^{\perp_{L^2}}\cap\left(H^{-s}\oplus \left(T_{f|_{\partial D}}\Omega_A({\mathcal U})\right)^{\perp_{L^2}}\right)\label{e:adj}.
\end{align}
A point in $T_{(f,\Sigma_0)}\left({\mathcal F}_A\cap\pi^{-1}({\mathcal U})\right)$ can be represented by $(\zeta,X_h)$, where 
$X_h={\mathbb J}({\mbox{grad}}_gh){\mbox{ mod }}T\Sigma_0$ is the normal vector field associated 
with some $h\in C_0^\infty(\Sigma_0)$ and an ambient background Riemannian metric $g$ on $P$, and we find that
\[
D_{(f,\Sigma_0)}\Delta(\zeta,X_h)=({\nabla}^+_{\mathbb J}\zeta,X_h(f|_{\partial D})-\zeta(f|_{\partial D})),
\]
where we have introduced the connection
\[
\nabla^{\pm}_{\mathbb J}={\textstyle{\frac{1}{2}}}\left(\frac{D}{dx}\pm{\mathbb J}\frac{D}{dy}\right),
\]
where $D$ is the connection on $(P,g)$.

Now let $(\psi,\alpha)\in\left({\mbox{Im}}\left(D_{(f,\Sigma)}\Delta\right)+\{0\}\oplus T_{f|_{\partial D}}\Omega_A({\mathcal U})\right)^{\perp_{L^2}}$. We show that  $(\psi,\alpha)=(0,0)$ as follows. By definition we have 
\[
\int_D({\nabla}^+_{\mathbb J}\zeta,\psi)+\int_{\partial D}(X_h(f|_{\partial D})-\zeta(f|_{\partial D}),\alpha)=0,
\]
for all $(\zeta,X_h)\in T_{(f,\Sigma_0)}\left({\mathcal F}_A\cap\pi^{-1}({\mathcal U})\right)$.

Integrating by parts and rearranging terms
\[
-\int_D(\zeta,{\nabla}^-_{\mathbb J}\psi)+\int_{\partial D}(\zeta,\tilde{\psi}-\alpha)+\int_{\partial D}(X_h\circ f|_{\partial D}),\alpha)=0,
\]
where $\tilde{\psi}$ is the $({\mathbb J},g)$-adjoint of $\psi$. As this holds for all $\zeta$ and $h$, we have
\[
{\nabla}^-_{\mathbb J}\psi=0
\qquad\qquad
\tilde{\psi}-\alpha=0 {\mbox{ on }}\partial D
\qquad\qquad
\alpha^\perp=0 {\mbox{ on }}\partial D .
\]
Since, by equation (\ref{e:adj}) $\alpha\in\left(T_{f|_{\partial D}}\Omega_A({\mathcal U})\right)^{\perp_{L^2}}$, we have $\alpha=\alpha^\perp$, which vanishes by the last equation above. Substitute this in the second equation to get $\tilde{\psi}=0$ on $\partial D$, and finally, by ellipticty and uniqueness, from the first equation with this boundary condition, $\psi=0$. Thus $(\psi,\alpha)=(0,0)$, as claimed, and transversality is established.

Consider the linearization of $\bar{\partial}$ at $(f, \Sigma) \in \mathcal{F}_A$ with respect to any $J$-parallel connection on $H^{1+s}({\mathcal F}_A)$:
\[
\nabla_{(f, \Sigma)} \bar{\partial}: H^{1+s}(f^*TP \otimes f^*T \Sigma) \to H^s(f^*TP).
\]
First note that this differential operator has the same symbol as the Cauchy-Riemann operator, and is elliptic. Now by Theorem 19.5.1 and Theorem 19.5.2 in \cite{hoermander} the operator $\Delta$ is Fredholm and has finite dimensional kernel and co-kernel. 

Moreover, by the Sard-Smale Theorem for Fredholm operators (see e.g. Theorem 1.3 of \cite{smale}), there exists a dense open set ${\mathcal W}\subset{\mathcal U}$ such that any (not multiply covered) holomorphic disc with boundary in $\Sigma\in{\mathcal W}$ is Fredholm-regular i.e. ${\mbox{dim coker}}(\nabla_{(f, \Sigma)} \bar{\partial})=0$.
\end{proof}
\vspace{0.1in}

\section{Lagrangian Boundary Conditions}

In the usual case for which Fredholm regularity of holomorphic discs is established, either totally real or Lagrangian boundary conditions can be implemented \cite{oh}. This is because it is assumed that the symplectic form on the ambient 4-manifold {\it tames} the (almost) complex structure, in that their composition yields a positive definite metric and therefore Lagrangian surfaces are totally real. 

In order to prove the above result for Lagrangian surfaces with a single complex point and the nearby Lagrangian surfaces, one must therefore consider the non-tame case: when the symplectic form and complex structure combine to form an indefinite metric. The signature of
such a metric is necessarily $(2,2)$, which we refer to as {\it neutral} K\"ahler. 

We now briefly discuss the difference between neutral and definite K\"ahler structures on a 4-manifold, particularly as regards co-dimension 2 sub-manifolds.

Let (${\mathbb{M}},{\mathbb{G}},{\mathbb{J}},\Omega$) be a K\"ahler surface. That is, ${\mathbb{M}}$ is a real 4-manifold endowed with the following structures. First, there is the metric ${\mathbb{G}}$, which we do not insist be positive definite - it may also have neutral signature (2,2). In order to deal with both cases simultaneously we assume that the metric can be diagonalized pointwise to ($1,1,\epsilon,\epsilon$), for $\epsilon=\pm1$.

In addition, we have a complex structure ${\mathbb{J}}$, which is a mapping
${\mathbb{J}}:$T$_p{\mathbb{M}}\rightarrow $T$_p{\mathbb{M}}$ at each $p\in{\mathbb{M}}$, which satisfies ${\mathbb{J}}^2=-{\mbox{Id}}$ and 
satisfies certain integrability conditions (the vanishing of the Nijenhuis tensor). 
Finally, there is a symplectic form $\Omega$, which is a closed non-degenerate 2-form. These structures are required  
to satisfy the compatibility conditions:
\[
{\mathbb{G}}({\mathbb{J}}\cdot,{\mathbb{J}}\cdot)={\mathbb{G}}(\cdot,\cdot) \qquad\qquad
{\mathbb{G}}(\cdot,\cdot)=\Omega({\mathbb{J}}\cdot,\cdot).
\]

The following result highlights the difference between the case where ${\mathbb{G}}$ is positive definite and where 
it is neutral:
\vspace{0.1in}
\begin{Prop}\cite{gak}
Let $p\in{\mathbb{M}}$ and $v_1,v_2\in T_p{\mathbb{M}}$ span a plane. Then
\[
\Omega(v_1,v_2)^2+\epsilon\varsigma^2(v_1,v_2)=|{\mbox{det }}{\mathbb{G}}(v_i,v_j)|,
\]
where $\varsigma^2(v_1,v_2)\geq0$ with equality iff $\{v_1,v_2\}$ spans a complex plane.
\end{Prop}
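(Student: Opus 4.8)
The plan is to reduce the statement to pointwise linear algebra at $p$, since $\Omega$, $\mathbb{G}$ and $\varsigma$ are all tensorial. First I would fix a $\mathbb{J}$-adapted orthogonal frame $\{e_1,e_2,e_3,e_4\}$ of $T_p\mathbb{M}$ with $e_2=\mathbb{J}e_1$, $e_4=\mathbb{J}e_3$ and $\mathbb{G}=\mathrm{diag}(1,1,\epsilon,\epsilon)$; such a frame exists by the standard polarisation of a $\mathbb{J}$-compatible metric, and the signs being \emph{equal} within each complex line $\{e_1,e_2\}$, $\{e_3,e_4\}$ is forced by $\mathbb{G}(\mathbb{J}\cdot,\mathbb{J}\cdot)=\mathbb{G}(\cdot,\cdot)$. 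Writing $v_1,v_2$ in this frame, the whole computation is governed by the Plücker coordinates $p_{ij}=v_1^i v_2^j-v_1^j v_2^i$ of the plane they span.

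Next I would express the three terms through the $p_{ij}$. From $\mathbb{G}(\cdot,\cdot)=\Omega(\mathbb{J}\cdot,\cdot)$ together with $\mathbb{G}(\mathbb{J}v,v)=0$ one gets $\Omega(v_1,v_2)^2=(p_{12}+\epsilon p_{34})^2$. The Gram determinant is handled by Cauchy--Binet applied to $\mathbb{G}(v_i,v_j)=V^{T}\eta V$ with $\eta=\mathrm{diag}(1,1,\epsilon,\epsilon)$, giving
\[
\det\mathbb{G}(v_i,v_j)=\sum_{k<l}\eta_{kk}\eta_{ll}\,p_{kl}^2=p_{12}^2+p_{34}^2+\epsilon\left(p_{13}^2+p_{14}^2+p_{23}^2+p_{24}^2\right).
\]
Finally I would identify $\varsigma$ with the restriction of the canonical holomorphic area form: setting $\Theta=(e^1+\mathrm{i}e^2)\wedge(e^3+\mathrm{i}e^4)$, a $(2,0)$-form, one has $\varsigma^2(v_1,v_2)=|\Theta(v_1,v_2)|^2=(p_{13}-p_{24})^2+(p_{14}+p_{23})^2$. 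This makes $\varsigma^2\geq 0$ transparent, and $\varsigma^2=0$ holds iff $\Theta$ vanishes on $\mathrm{span}\{v_1,v_2\}$, i.e. iff that plane is a complex line — exactly the asserted equality case.

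The crux is then a single algebraic identity. Subtracting,
\[
\det\mathbb{G}(v_i,v_j)-\Omega(v_1,v_2)^2=\epsilon\left(p_{13}^2+p_{14}^2+p_{23}^2+p_{24}^2-2p_{12}p_{34}\right),
\]
and I would invoke the Grassmann--Plücker relation $p_{12}p_{34}-p_{13}p_{24}+p_{14}p_{23}=0$ to recast the bracket as the sum of squares $(p_{13}-p_{24})^2+(p_{14}+p_{23})^2=\varsigma^2$. This yields the clean identity $\Omega(v_1,v_2)^2+\epsilon\,\varsigma^2(v_1,v_2)=\det\mathbb{G}(v_i,v_j)$.

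The main obstacle — indeed the only place genuine care is needed — is the sign bookkeeping that converts this into the stated form with $|\det\mathbb{G}(v_i,v_j)|$. In the definite case $\epsilon=+1$ the left side $\Omega^2+\varsigma^2$ is manifestly non-negative and equals $\det\mathbb{G}(v_i,v_j)$, so the absolute value is automatic and the relation $\Omega^2\leq\det\mathbb{G}$ is the usual Wirtinger inequality with complex planes as its equality locus. In the neutral case $\epsilon=-1$ the Gram determinant changes sign with the causal type of the plane (it is negative on Lorentzian planes such as $\mathrm{span}\{e_1,e_3\}$), so reconciling $\Omega^2-\varsigma^2=\det\mathbb{G}(v_i,v_j)$ with the absolute value on the right is exactly the delicate point, and amounts to the orientation/sign convention adopted in \cite{gak} for the induced area on planes of mixed type; I would resolve it by splitting according to the sign of $\det\mathbb{G}(v_i,v_j)$. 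A secondary point to check is that $\varsigma$ is frame-independent: $\Theta$ is determined up to a phase by $(\mathbb{J},\mathbb{G})$, so $|\Theta(v_1,v_2)|$ is well defined, and under a change of spanning basis $v_i\mapsto\sum_j M_{ij}v_j$ each of $\Omega(v_1,v_2)^2$, $\varsigma^2$ and $\det\mathbb{G}(v_i,v_j)$ scales by $(\det M)^2$, so the identity is independent of both the frame and the chosen basis of the plane.
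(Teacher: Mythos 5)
First, note that the paper itself offers no proof of this Proposition: it is quoted from \cite{gak}, so there is no internal argument to compare yours against, and your proposal must be judged on its own terms. Up to the last step, your computation is correct and complete. The adapted frame exists for the reasons you give; Cauchy--Binet (equivalently, Lagrange's identity) gives $\det\mathbb{G}(v_i,v_j)=p_{12}^2+p_{34}^2+\epsilon\left(p_{13}^2+p_{14}^2+p_{23}^2+p_{24}^2\right)$; the identification $\varsigma^2=|\Theta(v_1,v_2)|^2=(p_{13}-p_{24})^2+(p_{14}+p_{23})^2$ is the right notion, since vanishing of the $(2,0)$-form on a pair of independent vectors is equivalent to their complex-linear dependence, hence to the plane being complex; and the Pl\"ucker relation then yields
\[
\Omega(v_1,v_2)^2+\epsilon\,\varsigma^2(v_1,v_2)=\det\mathbb{G}(v_i,v_j),
\]
with the \emph{signed} determinant. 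All of this is sound, and frame-independence is handled correctly.

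The gap is in your final paragraph, where you treat the passage from $\det$ to $|\det|$ as a sign convention to be ``resolved by splitting according to the sign of $\det\mathbb{G}(v_i,v_j)$''. No such resolution exists: under the compatibility conventions stated in the paper, the identity with $|\det\mathbb{G}(v_i,v_j)|$ is false in the neutral case, and your own formulas exhibit the counterexample. Take $\epsilon=-1$ and $v_1=e_1$, $v_2=e_3$: this plane is totally real, $\Omega(v_1,v_2)=0$, and $\det\mathbb{G}(v_i,v_j)=-1$, so the left-hand side is $-\varsigma^2\leq 0$ while the right-hand side is $1$; no choice of a quantity $\varsigma^2\geq 0$ that vanishes exactly on complex planes can repair this (the same failure occurs for every non-complex Lagrangian plane, where your identity gives $\det\mathbb{G}=-\varsigma^2<0$, which is in fact the degeneracy-at-complex-points statement used elsewhere in the paper). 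The honest conclusion of your computation is that the Proposition holds with $\det\mathbb{G}(v_i,v_j)$ in place of $|\det\mathbb{G}(v_i,v_j)|$ (equivalently, with absolute values on both sides or on neither), and that the statement as printed here is a misquotation or typo. Deferring the discrepancy to an unspecified convention in \cite{gak} leaves the statement as written unproven --- necessarily so, since as written it is not true; identifying and stating this correction explicitly is the missing step.
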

\vspace{0.1in}

As a consequence, in the definite case, a Lagrangian plane, for which $\Omega(v_1,v_2)=0$, cannot be complex since this would mean that $\varsigma^2(v_1,v_2)=0$ and hence the metric would be degenerate, which is impossible.

In the neutral case, however, the metric induced on a plane can be degenerate and hence a plane can be both Lagrangian and holomorphic, as we illustrate in the next example.

\vspace{0.1in}
\begin{Ex}
Consider the neutral K\"ahler Structure on ${\mathbb R}^4$. That is, let $(x^1,x^2,x^3,x^4)$ be standard coordinates on ${\mathbb R}^4$ and consider 
the flat neutral metric $g$ defined
\[
ds^2=(dx^1)^2+(dx^2)^2-(dx^3)^2-(dx^4)^2.
\]
The complex structure
\[
J\left(X^1\frac{\partial}{\partial x^1}+X^2\frac{\partial}{\partial x^2}+X^3\frac{\partial}{\partial x^3}+X^4\frac{\partial}{\partial x^4}\right)
=-X^2\frac{\partial}{\partial x^1}+X^1\frac{\partial}{\partial x^2}-X^4\frac{\partial}{\partial x^3}+X^3\frac{\partial}{\partial x^4},
\]
is compatibility with the flat metric and has associated symplectic form given by
\[
\Omega=dx^1\wedge dx^2-dx^3\wedge dx^4 
\]
Thus $({\mathbb R}^4,g,J,\Omega)$ is a neutral K\"ahler structure.

Define the plane $\Pi$ by
\[
\Pi=a\left(\frac{\partial}{\partial x^1}+\frac{\partial}{\partial x^3}\right)+b\left(\frac{\partial}{\partial x^2}+\frac{\partial}{\partial x^4}\right)
\]
for $a,b\in{\mathbb R}$. A quick check shows that $\Pi$ is both Lagrangian and holomorphic:
\[
\Omega(\Pi)=0  \qquad\qquad J(\Pi)=\Pi
\]
\end{Ex}

\vspace{0.1in}
The space of Lagrangian boundary conditions in the plane bundle $P$ which we consider is 
\[
{\mathcal L}{\mbox{ag}}\equiv \left\{ \;\Sigma\subset P \; \;\left| \;\;  \Sigma\; {\mbox{is a}}\; C^{2+\alpha} \;{\mbox{ Lagrangian section of}} \; P\rightarrow M \;\right.\right\},  
\] 
and for a fixed $p \in P$ we identify those Lagrangian surfaces containing $p$ as
\[
{\mathcal L}{\mbox{ag}}_0\equiv \left\{ \;\Sigma\in{\mathcal L}{\mbox{ag}}\;\; \left| \;\;   p\in\Sigma \;\right.\right\}\equiv {\mathcal L}{\mbox{ag}}/ G_0,
\]
where $G$ is the fibre-preserving group of isometric automorphisms which is transitive on the fibres and $G_0=G/{\mbox{stab}}(p)$. Here and throughout we assume the existence of such a group which acts both holomorphically and symplectically. The collection of equivalence classes ${\mathcal L}{\mbox{ag}}_0$ is equipped with the quotient topology. We proceed as before with a Lagrangian surface containing a single complex point at $p \in P$.
\vspace{0.1in}
\begin{Lem}
Let $\Sigma_0 \in {\mathcal L}{\mbox{ag}}_0 $ be a (an equivalence class of)  surface with exactly one complex point at $p$. Then there exists an open neighbourhood ${\mathcal U}_{L}$ of  $\Sigma_0$ in ${\mathcal L}{\mbox{ag}}_0$ which has a canonical Banach manifold structure.
\end{Lem}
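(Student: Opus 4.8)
The plan is to re-run the construction of Lemma \ref{l:1}, but to cut the model space down so that it parametrises only those normal variations of $\Sigma_0$ that preserve the Lagrangian condition. The governing observation is the standard symplectic one: for the Lagrangian section $\Sigma_0$ the symplectic form provides a bundle isomorphism
\[
\iota_{(\cdot)}\Omega|_{\Sigma_0} : N\Sigma_0 \stackrel{\sim}{\longrightarrow} T^*\Sigma_0 ,
\qquad X \longmapsto \Omega(X,\cdot),
\]
which, crucially, is nondegenerate over the whole of $\Sigma_0$ — including the complex point $p$. Indeed, nondegeneracy here uses only that $\Omega$ is symplectic and that $\Sigma_0$ is Lagrangian, and is insensitive to whether the induced neutral metric ${\mathbb G}$ degenerates, which by the Proposition above is exactly what happens at $p$. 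Under this isomorphism a normal field $X$ generates an infinitesimally Lagrangian deformation precisely when the one-form $\Omega(X,\cdot)$ is closed, so the model space we want is the closed subspace of closed $C^{2+\alpha}$ one-forms on $\Sigma_0$.

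Concretely I would proceed as follows. First, invoke the Weinstein Lagrangian neighbourhood theorem to fix a symplectomorphism from a neighbourhood of $\Sigma_0$ in $(P,\Omega)$ onto a neighbourhood of the zero section in $T^*\Sigma_0$; since it is a purely symplectic statement it holds verbatim for the neutral metric and in a full neighbourhood of $\Sigma_0$, including the complex point. Under this chart the $C^{2+\alpha}$ Lagrangian sections near $\Sigma_0$ (which remain sections of $P\rightarrow M$, as $\Sigma_0$ is one) correspond bijectively to closed $C^{2+\alpha}$ one-forms $\beta$ near $0$, and the requirement that the section pass through $p$ becomes the linear condition $\beta|_p=0$, viewing $p$ as a point of $\Sigma_0\cong$ zero section. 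The space
\[
\{\,\beta\in C^{2+\alpha}(T^*\Sigma_0)\;:\; d\beta=0,\ \beta|_p=0\,\}
\]
is the intersection of two closed, complemented linear subspaces of the Banach space of $C^{2+\alpha}$ one-forms (closedness is complemented by Hodge theory for an auxiliary metric, the point condition is of finite codimension), hence is itself a split Banach space; pulling it back through the Weinstein chart — equivalently, exponentiating the corresponding normal fields as in Lemma \ref{l:1} — produces the chart exhibiting a neighbourhood ${\mathcal U}_L$ of $\Sigma_0$ in ${\mathcal L}{\mbox{ag}}_0$ as a Banach manifold. That this vanishing-at-$p$ slice is a faithful model for the quotient ${\mathcal L}{\mbox{ag}}/G_0$ follows exactly as in Lemma \ref{l:1}: every $G_0$-orbit of nearby Lagrangian sections has a unique representative through $p$, because $G$ acts transitively and symplectically on the fibres.

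The main technical point, and the place where this argument genuinely differs from Lemma \ref{l:1}, is the behaviour at the complex point. There the identification ${\mathbb J}T\Sigma_0{\mbox{ mod }}T\Sigma_0\cong N\Sigma_0$ broke down (the left-hand bundle acquires a vanishing fibre, since ${\mathbb J}T_p\Sigma_0=T_p\Sigma_0$) and had to be repaired using the transitive action of $G$; the same degeneracy infects any description of Lagrangian variations through the metric, because ${\mathbb G}$, and hence the gradient used to write $X_h={\mathbb J}({\mbox{grad}}\,h)$, is singular at $p$. I expect the real work to be checking that the symplectic identification above does sidestep this degeneracy cleanly and uniformly in a full $C^{2+\alpha}$ neighbourhood of $p$, and that the Weinstein chart may be taken with only $C^{2+\alpha}$ boundary regularity (absorbing the usual one-derivative loss) while remaining compatible with the $G_0$-action, so that the closed-form constraint cuts out an honestly split Banach submanifold rather than something singular near $p$. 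Once this is verified the Lagrangian condition is a closed, complemented linear constraint and the canonical Banach manifold structure on ${\mathcal U}_L$ follows.
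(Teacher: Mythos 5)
Your proposal is correct and shares its skeleton with the paper's proof --- both invoke the Weinstein neighbourhood theorem, model Lagrangian deformations by closed $C^{2+\alpha}$ one-forms, and use the vanishing-at-$p$ slice to represent $G_0$-orbits --- but you handle the complex point by a genuinely different, and cleaner, mechanism. The paper encodes the infinitesimal Lagrangian condition as $d({\mathbb J}(v)\lrcorner\Omega)=0$; since ${\mathbb G}(\cdot,\cdot)=\Omega({\mathbb J}\cdot,\cdot)$, this is closedness of the \emph{metric}-dual one-form, and accordingly the paper's model space is $\Gamma^{lag}({\mathbb J}T\Sigma_0{\mbox{ mod }}T\Sigma_0)$, sections of a bundle whose fibre vanishes at $p$; the degeneracy is then repaired by the $G_0$-action ``exactly as in Lemma \ref{l:1}''. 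You instead dualize with $\Omega$ alone, $v\mapsto\iota_v\Omega|_{T\Sigma_0}$, and observe that this is an isomorphism $N\Sigma_0\cong T^*\Sigma_0$ over all of $\Sigma_0$, including at $p$, because it needs only nondegeneracy of $\Omega$ and the Lagrangian condition, not the induced metric (which is exactly what degenerates at a complex point of a Lagrangian surface in neutral signature). This buys a conceptual clarification the paper's presentation lacks: the Lagrangian deformation space itself has no singularity at $p$, and the condition $\beta_p=0$ serves purely to pick the unique representative of each $G_0$-orbit, not to repair a degenerate bundle identification; it also makes the Banach-space structure of the model immediate. Your version is additionally more careful than the paper about the $C^{2+\alpha}$ regularity of the Weinstein chart, which the paper passes over in silence. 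Two minor simplifications are possible: complementedness of the closed-forms subspace (your Hodge-theory remark) is not actually needed, since a closed linear subspace of a Banach space is itself a Banach space and that is all a chart requires; and the bijection between nearby Lagrangian \emph{sections} of $P\rightarrow M$ and graphs of small closed one-forms deserves the one-line observation that being a section is a $C^1$-open condition.
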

\begin{proof}
The Weinstein Tubular Neighborhood Theorem \cite{wein} says that a neighborhood of $\Sigma_0 \subset P$ is symplectically isomorphic to the canonical $T^*\Sigma_0$. In fact, the Lagrangian surfaces near $\Sigma_0$ are modeled by the Banach space of Lagrangian sections, for which $d({\mathbb J}(v)\lrcorner\Omega)=0$. Thus, define
\[
\Gamma^{lag}(N\Sigma_0)=\left\{\;v\in\Gamma(N\Sigma_0)\;\;\left|\;\; v\in C^{2+\alpha},\;\;d({\mathbb J}(v)\lrcorner\Omega)=0\;\right.\right\},
\] 
Away from complex points, $N\Sigma_0\equiv{\mathbb J}T\Sigma_0$, since the symplectic and complex structures are compatible. 

At a complex point,
${\mathbb J}T\Sigma_0\equiv T\Sigma_0$ and so ${\mathbb J}T\Sigma_0{\mbox{ mod }}T\Sigma_0=\{0\}$. If there is only one complex point at $p$, we model the space of Lagrangian variations of $\Sigma_0$ by the Lagrangian sections that vanish at $p$, as before. 

Thus, define
\[
\Gamma_0^{lag}(N\Sigma_0)=\{v\in\Gamma^{lag}(N\Sigma_0)\;\left|\; v(\pi(p))=0\;\right.\}.
\]

Exactly as in Lemma \ref{l:1}, but now carrying the Lagrangian condition, it follows that the Banach space 
\[
\Gamma_0^{lag}(N\Sigma_0)\equiv\Gamma^{lag} ({\mathbb J}T\Sigma_0{\mbox{ mod }}T\Sigma_0),
\]
models the open Banach manifold of variations of $ \Sigma_0$ in the space of Lagrangian surfaces passing through $p$.
\end{proof}

\vspace{0.1in}

If $(P,{\mathbb J},\Omega)$ is neutral K\"ahler and the transitive action $G$ is both holomorphic and symplectic, the following holds:

\vspace{0.1in}
\noindent{\bf Theorem \ref{t:2}}:
{\it 
Let $\Sigma_0\subset P$ be a $C^{2+\alpha}$-smooth Lagrangian section with just one complex 
point and ${\mathcal U}_{L}$ be the Banach manifold of Lagrangian sections that are $C^{2+\alpha}$-close to $\Sigma_0$.

Then there exists an open dense subset ${\mathcal W_{L}}\subset{\mathcal U_{L}}$ such that any (non-multiply covered) holomorphic disc with boundary on $\Sigma\in {\mathcal W_{L}}$ is Fredholm regular.
  
}
\vspace{0.1in}
\begin{proof}
We consider ${\mathcal{F}}_A$ to be as in the first section, with  ${\mathcal U}$ is replaced by Lagrangian boundary conditions ${\mathcal U_{L}}$. That is, we consider the map
$\Delta:{\mathcal F}_A\cap\pi^{-1}({\mathcal U_{L}})\rightarrow H^s\times \Omega(P)$ and 
\[
{\mathcal M}_A\cap\pi^{-1}({\mathcal U_{L}}) = \Delta^{-1}(\{0\}\times\Omega_A({\mathcal U_{L}})),
\]
where $ {\mathcal M}_A$ denotes the collection of holomorphic discs with boundary class in $ \Omega_A({\mathcal U_{L}})$. 

Now ${\mathcal M}_A\cap\pi^{-1}({\mathcal U_{L}})$ is shown to be a Banach manifold by transversality as in Theorem  1, where the argument is not affected by the Lagrangian property of the boundary condition. 

The application of the Theorem of Sard-Smale to the Fredholm operator  $\Delta$ then serves to complete the proof.
\end{proof}

\end{document}